\theoremstyle{plain}
\newtheorem{theorem}{Theorem}[section]
\newtheorem{lemma}{Lemma}[section]
\newtheorem{remark}{Remark}[section]
\newtheorem{condition}{Condition}[section]
\theoremstyle{definition}
\theoremstyle{remark}
\renewcommand{\Re}{\mathbf{Re}}
\title{Frequency Theorem for discrete time stochastic system with multiplicative noise}
\author{Peter Situmbeko Nalitolela\footnote{peter.nalitolela@gmail.com}\\Department of Mathematics, University of Dar Es Salaam\\
                Dar Es Salaam, Tanzania
                \and
                Nikolai Dokuchaev\footnote{N.Dokuchaev@curtin.edu.au}\\ Department of Mathematics \& Statistics, Curtin University\\
                Perth, Australia
                }
\date{}
\begin{document}
\maketitle
\begin{abstract}
In this paper we consider the problem of minimizing a quadratic
functional for a discrete-time linear stochastic system with
multiplicative noise, on a standard probability space, in infinite
time horizon. We show that the necessary and sufficient conditions
for the existence of the optimal control can be formulated as matrix
inequalities in frequency domain. Furthermore, we show that if the
optimal control exists, then certain Lyapunov equations must have a
solution. The optimal control is obtained by solving a deterministic
linear-quadratic optimal control problem whose functional depends on
the solution to the Lyapunov equations. Moreover, we show that under
certain conditions, solvability of the Lyapunov equations is
guaranteed. We also show that, if the frequency inequalities are
strict, then the solution is unique up to equivalence.
\end{abstract}
{\bf Mathematics Subject Classification}: 93E20, 49N10\\
{\bf Key words}: Stochastic Optimal Control, Frequency Theorem, Kalman--Yakubovich Lemma, Kalman--Szego Lemma, Lyapunov equations.
%
\section{Introduction}\label{intro}
Kalman--Yakubovich Lemma (KY Lemma) was a groundbreaking result that
paved way for a solutions to lots of problems in control theory,
including optimal control. The first variant of the Lemma was
derived by Yakubovich in 1962 (see \cite{Yakubovich62}). The
following year, the discrete-time version of that result was derived
by Szeg\H{o} and Kalman (see \cite{Szego_Kalman63}). It is called
sometimes the Kalman--Szeg\H{o} Lemma (KS Lemma); see
\cite{Likhtarnikov_et_al96, Pakshin06, Barabanov07} for a
comprehensive review of various results in control theory derived
from the KY Lemma. Various works such as \cite{Andreev72,
Andreev_et_al71, Andreev_et_al72, Andreev_Shepelyavyi72} considered
problems with quadratic functionals whereas Yakubovich (see
\cite{Yakubovich74,Yakubovich75}) derived the KY Lemma for the case
in which both the control and state vectors are both Hilbert spaces.
\par Dokuchaev \cite{Dokuchaev84} considered a
continuous time stochastic linear-quadratic optimal control problem,
with the state evolution described by It\^{o} equations, with state
dependent coefficients; a generalization of the Frequency Theorem
was obtained. We consider a discrete-time analogy of the problem
studied in \cite{Dokuchaev84}. We show that the necessary and
sufficient conditions for the existence of the optimal control can
be formulated as matrix inequalities in frequency domain.
Furthermore, we show that if the optimal control exists, then
certain Lyapunov equations must have a solution. The optimal control
is obtained by solving a deterministic linear-quadratic optimal
control problem whose functional depends on the solution to the
Lyapunov equations. Moreover, we show that under certain conditions,
solvability of the Lyapunov equations is guaranteed. We also show
that, if the frequency inequalities are strict, then the solution is
unique up to equivalence.
\section{Problem Statement}\label{problem}
We consider the following optimization problem on a standard
probability space, $(\Omega,\mathscr{F}, \mathbf{P})$.\par \noindent
\begin{eqnarray}
\Phi \left(u_{.}\right) & = & \sum\limits_{t=0}^{+\infty}
\hbox{Minimize}\quad \mathbf{E} \, \left[ x_{t}^{*} G x_{t} + 2\Re
\, x_{t}^{*} \gamma u_{t} + u_{t}^{*} \Gamma
u_{t}\right]\label{problem_functional}
\end{eqnarray}
over the set
\begin{eqnarray}
\mathbf{U} & = & \left\{ u_{t} \in \mathbb{R}^{m}: \sum\limits_{t=0}^{+\infty} \left|u_{t}\right|^{2} < +\infty\right\}\label{problem_feasible}
\end{eqnarray}
subject to
\begin{eqnarray}
x_{t+1} &=& A x_{t} + b u_{t} + C x_{t} \xi_{t+1}, \qquad t=0,1,2,\ldots \label{problem_constraints}\\
x_{0} &=& a. \label{problem_IC}
\end{eqnarray}
Here $x_{t}$ is a random $n$-vector of states, $u_{t}$ is an
$m$-vector of controls and $\mathbf{U}$ is the set of admissible
controls.  Matrices $A\in R^{n\times n}$, $b\in R^{n\times m}$,
$C\in R^{n\times n}$, $G=G^\top\in R^{n\times n}$, $\gamma\in
R^{n\times n} $, and $\Gamma=\Gamma^\top\in{m\times m}$ are
constant. The scalar $\xi_{t}\in \mathbb{R}$ is the discrete-time
white noise adapted to a flow of non-decreasing $\sigma$-algebras
$\mathscr{F}_{t}\subset \mathscr{F}$ such that $\mathbf{E} \xi_{t} =
0$, $\mathbf{Var} \left(\xi_{t}\right) = 1$. The vector $a$ is
random, measurable with respect to $\mathscr{F}_{0}$, independent of
$\{\xi_{t}\}_{t=0}^{+\infty}$ and is such that $\mathbf{E}\left|a
\right|^{2} < +\infty$ and $\mathbf{E}\left|a a^{\top}\right|^{2} <
+\infty$;  we denote by $|.|$ the Euclidean norm for vectors and
Frobenius norm for matrices.

We assume all the matrices in \eqref{problem_feasible} and
\eqref{problem_constraints} are real and we restrict our
considerations to the case when all eigenvalues
$\lambda\left(A\right)$ of $A$ lie inside the unit disk on the
complex plane (that is, the spectral radius of $A$ is
$\rho\left(A\right) < 1$). 
Moreover, we  assume that the system is stable in mean-square sense
for $u_{t}\equiv 0$. Various sufficient conditions of this stability
can be found in \cite{Korenevskii86_2, Korenevskii86_3,
Korenevskii92, Korenevskii98, Korenevskii00, Levit_Yakubovich72,
Morozan68, Ryashko_Schurz96, Willems73, Pakshin06} and other works.
\par For random $x_{t}, y_{t} \in \mathbb{C}^{n}$ we denote the inner product $\left(x_{.},
y_{.}\right)$ by $\left(x_{.}, y_{.}\right) =
\sum\limits_{t=0}^{+\infty} \mathbf{E} \, x_{t}^{\top}
\overline{y}_{t}$ and the norm by $\left\|x_{.}\right\| =
\sqrt{\left(x_{.}, x_{.}\right)}$. Furthermore, we write
$\left\|x_{.}\right\|_{1}=\sum\limits_{t=0}^{+\infty} \mathbf{E} \,
\left|x_{t}\right|_{1}$ where $|x|_{1}$ is the $l_{1}$-norm
$|x|_1=\sum_i|x_i|$ of a vector $x$ or an entrywise $l_{1}$-norm
$|x|_1=\sum_{ij}|x_{ij}|$ of a matrix $x$.
\section{Main Results}\label{results}
\begin{condition}\label{matrix_condition}
There exist symmetric matrices $H$ and $\Theta$ in
$\mathbb{C}^{n\times n}$ satisfying
\begin{eqnarray}
A^{\top}H A - H + \Theta &=& 0 ,\label{matrix_eqn1}\\
\Theta - C^{\top}H C - G &=& 0 .\label{matrix_eqn2}
\end{eqnarray}
\end{condition}
\noindent Let $\Theta$ be the matrix satisfying Condition
\ref{matrix_condition}. Consider the hermitian form $\mathcal{F} :
\mathbb{C}^{n} \times \mathbb{C}^{m} \mapsto \mathbb{R}$ given by
\begin{eqnarray}
\mathcal{F}\left(x,u\right) &=& x^{*}\Theta x + 2\Re x^{*}\gamma u + u^{*}\Gamma u .\label{hermitian_form}
\end{eqnarray}
Let $g : \mathbb{C} \mapsto \mathbb{C}^{n \times n}$ be the matrix-valued function
\begin{eqnarray}
g\left(z\right) &=& \left(z I - A\right)^{-1},\label{transfer_matrix}
\end{eqnarray}
We denote the unit circle by $\mathbf{\zeta} = \left\{z
\in\mathbb{C}: \left|z\right| = 1\right\}$.
\par
The following Theorem establishes necessary and sufficient
conditions for the existence of optimal $u^{o}$ for the problem
\eqref{problem_functional}-\eqref{problem_IC}
\begin{theorem}\label{main_theorem}
If there exists exists a $u^{o}\in \mathbf{U}$ such that $\Phi\left(u^{o}\right)\leq \Phi\left(u\right)$, for all $u\in \mathbf{U}$ then
\begin{enumerate}[i)]
\item it is necessary that
\begin{eqnarray}
\mathcal{F}\left(g\left(z\right) b u,u\right) &\geq& 0, \qquad \left(\forall z \in \mathbf{\zeta}, \forall u \in \mathbb{C}^{m}\right).\label{necessary_condition}
\end{eqnarray}
\item Furthermore, if there exists a $\delta > 0$ such that
\begin{eqnarray}
\mathcal{F}\left(g\left(z\right) b u,u\right) &\geq& \delta \left|u\right|_2^2, \qquad \left(\forall z \in \mathbf{\zeta}, \forall u \in \mathbb{C}^{m}\right),\label{necessary_condition_strict}
\end{eqnarray}
then $u^{o}$ is unique (up to equivalence).
\end{enumerate}
\end{theorem}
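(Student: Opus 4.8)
The plan is to treat $\Phi$ as a quadratic functional on the linear space $\mathbf{U}=\ell^2$ and to isolate its pure quadratic part, which, by Condition \ref{matrix_condition}, collapses to a purely deterministic frequency functional. First I would use linearity of \eqref{problem_constraints} to split $x_t = z_t + y_t$, where $z_t$ is the response to the initial data $a$ with $u\equiv 0$ and $y_t$ is the response to $u$ with $y_0 = 0$. Substituting into \eqref{problem_functional} gives $\Phi(u) = \Phi_0 + L(u) + Q(u)$, where $\Phi_0 = \sum_t \mathbf{E}\,z_t^* G z_t$ is independent of $u$, $L$ is linear, and $Q(u) = \sum_t \mathbf{E}[\,y_t^* G y_t + 2\Re\,y_t^*\gamma u_t + u_t^*\Gamma u_t]$ is the associated Hermitian form. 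Since $\mathbf{U}$ is a vector space and $\Phi$ is quadratic, optimality of $u^o$ forces the first variation to vanish and the second variation to be nonnegative, i.e. $Q(v)\ge 0$ for every $v\in\mathbf{U}$. This reduces part (i) to showing that $Q(v)\ge 0$ for all $v$ is equivalent to the frequency inequality \eqref{necessary_condition}.

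The key step is an algebraic reduction driven by Condition \ref{matrix_condition}. Writing $\bar y_t = \mathbf{E}\,y_t$, which solves the deterministic recursion $\bar y_{t+1} = A\bar y_t + b u_t$, $\bar y_0 = 0$, I would telescope the increments of $\mathbf{E}[\,y_t^* H y_t]$ along the noisy dynamics and of $\bar y_t^* H \bar y_t$ along the mean dynamics. Using \eqref{matrix_eqn1}--\eqref{matrix_eqn2} and the identity $A^\top H A - H + C^\top H C = -G$, both telescoped sums collapse to the same expression $\sum_t(2\Re\,\bar y_t^* A^\top H b u_t + u_t^* b^\top H b u_t)$; mean-square stability together with $u\in\ell^2$ makes the boundary terms vanish as $t\to\infty$. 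Hence $\sum_t\mathbf{E}\,y_t^* G y_t = \sum_t \bar y_t^*\Theta\bar y_t$, the multiplicative noise drops out, and $Q(u) = \sum_t \mathcal{F}(\bar y_t, u_t)$ becomes a deterministic form in the mean trajectory. Applying Parseval's identity to the $\ell^2$ sequences $u$ and $\bar y$, with $\widehat{\bar y}(e^{i\omega}) = g(e^{i\omega}) b\,\widehat u(e^{i\omega})$ from \eqref{transfer_matrix}, I obtain
\[
Q(u) = \frac{1}{2\pi}\int_0^{2\pi} \mathcal{F}\!\left(g(e^{i\omega}) b\,\widehat u(e^{i\omega}),\, \widehat u(e^{i\omega})\right) d\omega .
\]

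To pass from $Q(u)\ge 0$ for all $u$ to the pointwise inequality \eqref{necessary_condition}, I would localize in frequency: for fixed $z_0 = e^{i\omega_0}\in\zeta$ and $u_0\in\mathbb{C}^m$, choose controls $u_t = u_0 e^{i\omega_0 t}/\sqrt{N}$ on $0\le t < N$, so that $|\widehat u(e^{i\omega})|^2 d\omega$ becomes a Fej\'er kernel concentrating at $\omega_0$; letting $N\to\infty$ forces $\mathcal{F}(g(z_0) b u_0, u_0)\ge 0$, and continuity of the symbol promotes this to all of $\zeta$. The realness of the form (all data, and the solutions $H,\Theta$ of Condition \ref{matrix_condition}, may be taken real) lets me upgrade real test sequences to arbitrary complex $u_0$. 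I expect the main obstacle to be precisely this analytic passage, together with rigorous justification of the telescoping: one must control the $\ell^2$/$\ell^1$ regularity of $\{\mathbf{E}\,y_t y_t^*\}$ so that every sum converges and the term at infinity is genuinely zero, and confirm that the restricted (one-sided, real) class of admissible controls is rich enough to force pointwise nonnegativity of the continuous Hermitian symbol.

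Finally, for part (ii) the strict inequality \eqref{necessary_condition_strict} means the symbol dominates $\delta I$ on $\zeta$, so the same Parseval identity yields $Q(v)\ge \delta\|v\|^2$ for all $v\in\mathbf{U}$; that is, $Q$ is coercive. If $u^o$ and $\tilde u$ were two minimizers with $v = \tilde u - u^o\ne 0$, then, the first variation at $u^o$ being zero, $\Phi(\tilde u) - \Phi(u^o) = Q(v)\ge\delta\|v\|^2 > 0$, contradicting $\Phi(\tilde u) = \Phi(u^o)$. Hence $v = 0$ and $u^o$ is unique up to equivalence.
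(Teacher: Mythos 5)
Your proposal is correct in outline, and it reaches the theorem by a genuinely more self-contained route than the paper's. Both arguments share the same pivot: Condition \ref{matrix_condition} is used to trade $G$ for $\Theta$ so that the functional becomes, in frequency domain, a quadratic form with the Hermitian symbol $\Pi(z)$ of \eqref{matrix_PI}. The paper implements this trade via Lemma \ref{theta_condition} (the identity $G=\Theta-T(\Theta)$) together with Parseval, and then stops doing analysis: it matches the forms $R_1=R$, $r_1=r$ of the stochastic problem and of the deterministic problem \eqref{deterministic_functional}--\eqref{deterministic_IC} (Theorem \ref{supplementary_theorem}) and cites Yakubovich's Hilbert-space frequency theorem for both the necessity of \eqref{necessary_condition} and the uniqueness under \eqref{necessary_condition_strict}. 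You instead prove both parts directly: the splitting $x_t=z_t+y_t$ and the telescoping of $\mathbf{E}\,[y_t^*Hy_t]$ along the noisy dynamics (via $A^\top HA-H+C^\top HC=-G$) yield the identity $\sum_t\mathbf{E}\,y_t^*Gy_t=\sum_t\bar y_t^*\Theta\bar y_t$, so the second variation of the stochastic functional coincides with the deterministic form $\sum_t\mathcal{F}(\bar y_t,u_t)$; a first/second-variation argument, Fej\'er-kernel localization, and coercivity then replace the citation. What the paper's route buys is brevity and, through Theorem \ref{supplementary_theorem}, the equivalence of existence for the two problems; what your route buys is an elementary, self-contained proof (in effect you re-prove the deterministic KS-type necessity inline) and a transparent explanation of why the multiplicative noise drops out. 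The technical debts you flag are the right ones and are dischargeable by standard means: $\mathbf{E}\,|y_t|^2\to0$, hence vanishing boundary terms in the telescoping, follows from $\rho(A\otimes A+C\otimes C)<1$ and $u\in\ell^2$ by the Young/H\"older argument already used in Lemma \ref{proof_lemma1}; and the one-sided, real test sequences suffice because the symbol is continuous, Hermitian, and satisfies $\overline{\Pi(z)}=\Pi(\bar z)$, so the two Fej\'er peaks at $e^{\pm i\omega_0}$ contribute equal real values, with the points $z=\pm1$ recovered by continuity.
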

Theorem \ref{main_theorem} above is an analog of KS Lemma for
discrete-time optimal stochastic control problem
\eqref{problem_functional}-\eqref{problem_IC}. This is a discrete
time version of a continuous-time result obtained in
\cite{Dokuchaev84} for the case when $\gamma = 0$ and in Chapter 5
of \cite{Nalitolela10}) for the general $\gamma$.
\subsection{Proof of Theorem \ref{main_theorem}}
\begin{lemma}\label{proof_lemma1}
If $u_{t} \in \mathbf{U}$, then $\sup_{t\ge 0}\mathbf{E}\,
\left|x_{t}\right|^{2}<+\infty$ for the solution of system
\eqref{problem_constraints}-\eqref{problem_IC}.
\end{lemma}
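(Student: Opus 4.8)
The plan is to derive a closed recursion for the second-moment matrix $P_t = \mathbf{E}\,[x_t x_t^\top]$ and to read off the bound on $\mathbf{E}\,|x_t|^2 = \operatorname{tr} P_t$ from it. Since $\xi_{t+1}$ has zero mean, unit variance and is independent of $\mathscr{F}_t$, while $x_t$ is $\mathscr{F}_t$-measurable and $u_t$ is deterministic, multiplying \eqref{problem_constraints} on the right by its transpose and taking expectations annihilates every term carrying a single factor of $\xi_{t+1}$ and replaces $\xi_{t+1}^2$ by $1$. This yields
\begin{equation*}
P_{t+1} = A P_t A^\top + C P_t C^\top + D_t, \qquad D_t := A m_t u_t^\top b^\top + b u_t m_t^\top A^\top + b u_t u_t^\top b^\top ,
\end{equation*}
where $m_t = \mathbf{E}\,x_t$ obeys the deterministic recursion $m_{t+1} = A m_t + b u_t$, $m_0 = \mathbf{E}\,a$. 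I would write $\mathcal{L}(P) := A P A^\top + C P C^\top$ for the linear operator on symmetric matrices governing the homogeneous second-moment dynamics, so that $P_{t+1} = \mathcal{L}(P_t) + D_t$ with $P_0 = \mathbf{E}\,[a a^\top]$, which is finite by the standing hypotheses on $a$.

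First I would control the mean. Because $\rho(A) < 1$ we have $\|A^k\| \le M_A \rho_A^k$ for some $M_A \ge 1$ and $\rho_A \in (0,1)$, so that $m_t = A^t m_0 + \sum_{s=0}^{t-1} A^{t-1-s} b u_s$ is the convolution of a summable kernel with the square-summable sequence $\{u_s\}$; by Young's convolution inequality $\sup_{t} |m_t| < +\infty$. Consequently $\|D_t\| \le \kappa\,(|m_t|\,|u_t| + |u_t|^2)$ for a constant $\kappa$ depending only on $A$ and $b$, and since $u_t \in \mathbf{U}$ forces $|u_t| \to 0$ with $\sup_t |u_t| < +\infty$, the forcing satisfies $\sup_{t} \|D_t\| < +\infty$.

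Next I would invoke the assumed mean-square stability of the system for $u \equiv 0$, which is precisely the statement that $\mathcal{L}$ has spectral radius strictly less than one; equivalently $\|\mathcal{L}^k\| \le M \rho^k$ for some $M \ge 1$ and $\rho \in (0,1)$, in any fixed submultiplicative norm on symmetric matrices. Unrolling the recursion gives the discrete variation-of-constants formula $P_t = \mathcal{L}^t(P_0) + \sum_{s=0}^{t-1} \mathcal{L}^{t-1-s}(D_s)$, whence
\begin{equation*}
\|P_t\| \le M \rho^t \|P_0\| + M \Bigl(\sup_{s} \|D_s\|\Bigr) \sum_{k=0}^{\infty} \rho^k = M \rho^t \|P_0\| + \frac{M}{1-\rho}\,\sup_{s}\|D_s\| .
\end{equation*}
The right-hand side is bounded uniformly in $t$, and since $0 \le \mathbf{E}\,|x_t|^2 = \operatorname{tr} P_t \le \sqrt{n}\,\|P_t\|$ for $P_t \succeq 0$, this gives $\sup_{t \ge 0}\mathbf{E}\,|x_t|^2 < +\infty$.

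The only delicate point is the passage from the qualitative hypothesis \emph{mean-square stable for $u \equiv 0$} to the quantitative geometric bound $\|\mathcal{L}^k\| \le M\rho^k$ on the powers of $\mathcal{L}$; here I would either cite one of the stability references listed after \eqref{problem_constraints} or observe that mean-square stability of the homogeneous recursion is equivalent to $\rho(\mathcal{L}) < 1$, which yields the geometric decay via the spectral radius formula. Everything else — the vanishing of the noise cross-terms, the convolution estimate bounding $m_t$, and the final convolution bound for $P_t$ — is routine.
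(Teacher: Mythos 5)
Your proof is correct and follows essentially the same route as the paper: derive the recursion for the mean $\mathbf{E}\,x_t$ and the second-moment matrix $\mathbf{E}\,x_t x_t^\top$, identify mean-square stability for $u\equiv 0$ with the spectral radius of the second-moment operator being less than one, and bound the forced linear recursion via a convolution (variation-of-constants) estimate. The only difference is notational --- you work with the operator $\mathcal{L}(P)=APA^\top+CPC^\top$ directly where the paper vectorizes $M_t$ and uses the Kronecker matrix $\mathcal{A}=A\otimes A+C\otimes C$ --- and your write-up actually spells out the final estimate more explicitly than the paper's appeal to H\"older's inequality and Young's theorem.
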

\begin{proof}
Let
\begin{eqnarray}
\mu_{t} &=& \mathbf{E} \, x_{t}, \label{mean_x}\\
M_{t} &= & \mathbf{E}\, x_{t}x_{t}^{\top}\label{mean_x_sqd}
\end{eqnarray}
From \eqref{problem_constraints}-\eqref{problem_IC} and \eqref{mean_x}, we have
\begin{eqnarray}
\mu_{t+1} &=& A\mu_{t} +  b u_{t}\qquad t=0,1,2,\ldots,\label{mu_t}\\
\mu_{0} &= & \mathbf{E}\, a.\label{mu_0}
\end{eqnarray}
Note that $\left|\mathbf{E}\, a\right|^{2}
=\sum\limits_{i=1}^{n}\left(\mathbf{E}\, a_{i}\right)^2\leq
\sum\limits_{i=1}^{n}\mathbf{E}\, a_{i}^2=\mathbf{E}\left|\,
a\right|^{2}<+\infty$. Thus, using the fact that
$u_{t}\in\mathbf{U}$ and $\rho\left(A\right) < 1$, it follows from
\eqref{mu_t}-\eqref{mu_0} that $\left\|\mu_{t}\right\| <
+\infty$.\par From \eqref{problem_constraints}-\eqref{problem_IC}
and \eqref{mean_x_sqd}, we have
\begin{eqnarray}
M_{t+1} &=& A M_{t} A^{\top} + A \mu_{t} u_{t}^{\top} b^{\top} + b u_{t} \mu_{t}^{\top} A^{\top}+b u_{t} u_{t}^{\top} b^{\top}+ C M_{t}C^{\top},\label{proof_deterministic1}\\
M_{0} &=& \mathbf{E}\, a a^{\top}.\label{proof_deterministic2}
\end{eqnarray}
Let $Q_{t} = A \mu_{t} u_{t}^{\top} b^{\top} + b u_{t}
\mu_{t}^{\top} A^{\top}+b u_{t} u_{t}^{\top} b^{\top}$. Let us
denote the $j$-th colum of a matrix $D$ by $D^{(j)}$. We define the
vectors $q_{t}, m_{t}\in \mathbb{C}^{n^{2}}$ as
\begin{eqnarray}
q_{t} = \left[
                            \begin{array}{c}
                            Q_{t}^{(1)}\\
                            Q_{t}^{(2)}\\
                            \vdots\\
                            Q_{t}^{(n)}\\
                            \end{array}
                            \right], \qquad
m_{t} = \left[
                            \begin{array}{c}
                            M_{t}^{(1)}\\
                            M_{t}^{(2)}\\
                            \vdots\\
                            M_{t}^{(n)}\\
                            \end{array}
                        \right]. \label{proof_defn_qm}
\end{eqnarray}
The vectors $q_{t}$ and $m_{t}$ are formed by stacking up the
columns of the matrices $Q_{t}$ and $M_{t}$, respectively. Set
$\mathcal{A}=A\otimes A + C\otimes C$ (where $\otimes$ denotes the
Kronecker product). We can then rewrite \eqref{proof_deterministic1}
as
\begin{eqnarray}
m_{t+1} &=& \mathcal{A} m_{t} + q_{t}.\label{matrix_to_linear}
\end{eqnarray}
Note that the system in \eqref{matrix_to_linear} is of dimension
$n^{2}$, however, due symmetry, it can be reduced to a system of
dimension $\dfrac{n^{2}+n}{2}$.
\par The assumption that the system
\eqref{problem_constraints}-\eqref{problem_IC} is stable in the
mean-square sense for $u_{t} = 0$, is equivalent to $m_{t}$ being
stable for $q_{t} = 0$, which is true if and only if the spectral
radius of $\mathcal{A}$ is $\rho\left(\mathcal{A}\right) < 1$. From
the solution of \eqref{matrix_to_linear}, we can show, using
H\"{o}lder's inequality and Young's theorem, that
$\left\|m_{t}\right\|_{1} < +\infty$, therefore $\sup_{t\ge
0}\mathbf{E}\, \left|x_{t}\right|^{2}<+\infty$. This compoletes the
proof of  Lemma \ref{proof_lemma1}.
\end{proof}
It follows from Lemma \ref{proof_lemma1} that the $Z$-transform,
$\hat{x}\left(z\right)$, of $x_{t}$, exists, and it's radius of
convergence contains the unit circle, $\mathbf{\zeta}$. If we set
$x_{t}=0$, $u_{t}=0$ for all $t<0$ we can then take the
$Z$-transform of the system
\eqref{problem_constraints}-\eqref{problem_IC} and obtain
\begin{eqnarray}
\hat{x}\left(z\right) &=& z g\left(z\right) a + g\left(z\right) b
\hat{u}\left(z\right)+g\left(z\right) C
\sum_{t=-\infty}^{\infty}\xi_{t+1}
\frac{x_{t}}{z^{t}}.\label{transfer_function}
\end{eqnarray}
Let $D$ be an $n\times n$ real symmetric matrix and let $T\mapsto
\mathbb{R}^{n \times n} \times \mathbb{R}^{n \times n}$ be defined
by
\begin{eqnarray}
T\left(D\right) &=& \frac{1}{2\pi i} \oint_{\mathbf{\zeta}} C^{\top}g\left(z\right)^{\top} D g(z) C\frac{1}{z}dz. \label{theta_function}
\end{eqnarray}
\begin{lemma}\label{theta_condition}
Condition \ref{matrix_condition} is satisfied if and only if $\Theta$ satisfies
\begin{eqnarray}
G &=& \Theta - T\left(\Theta\right). \label{G_Theta}
\end{eqnarray}
\end{lemma}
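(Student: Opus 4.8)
The plan is to reduce Condition~\ref{matrix_condition} to a single equation in $\Theta$ by solving \eqref{matrix_eqn1} explicitly for $H$, and then to identify the resulting matrix $C^\top H C$ with $T(\Theta)$. First I would observe that, for a fixed symmetric $\Theta$, equation \eqref{matrix_eqn1} is the discrete Lyapunov equation $H - A^\top H A = \Theta$. Since $\rho(A) < 1$, the spectra of $A$ and $A^\top$ lie strictly inside the unit disk, so $\lambda_i(A)\lambda_j(A) \neq 1$ for all $i,j$, and this Lyapunov equation has the unique solution
\[
H = \sum_{k=0}^{\infty} (A^\top)^k\, \Theta\, A^k,
\]
the series converging absolutely because $\|A^k\| \le M\rho^k$ for some $\rho \in (\rho(A),1)$. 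Because $\Theta = \Theta^\top$, term-by-term transposition gives $H = H^\top$, so the unique solution $H$ is automatically symmetric.

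The crux of the argument is the identity $T(\Theta) = C^\top H C$. To establish it I would use that on $\zeta$ we have $|z| = 1 > \rho(A)$, so the Neumann-type expansion $g(z) = (zI - A)^{-1} = \sum_{k\ge 0} A^k z^{-(k+1)}$ converges uniformly on $\zeta$; the analogous expansion applies to the transposed resolvent once one uses $\bar z = z^{-1}$ on the unit circle, so that $g(z)^\top$ enters as the conjugate factor $\sum_{j\ge 0}(A^\top)^j z^{\,j+1}$. Substituting both expansions into \eqref{theta_function}, the integrand becomes a uniformly convergent double series in powers of $z$, which may be integrated term by term. Using $\frac{1}{2\pi i}\oint_\zeta z^{-m}\,dz = \delta_{m,1}$ (equivalently, Parseval's identity on the circle, since $\frac{1}{z}\,dz = i\,d\theta$), only the diagonal terms $j=k$ survive, giving
\[
T(\Theta) = \sum_{k=0}^{\infty} C^\top (A^\top)^k\, \Theta\, A^k C = C^\top H C.
\]

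With this identity in hand, both implications follow by substitution into \eqref{matrix_eqn2}. For the forward direction, if symmetric $H,\Theta$ satisfy \eqref{matrix_eqn1}--\eqref{matrix_eqn2}, then $H$ is the unique Lyapunov solution above, hence $C^\top H C = T(\Theta)$, and \eqref{matrix_eqn2} reads $G = \Theta - C^\top H C = \Theta - T(\Theta)$. Conversely, given a symmetric $\Theta$ with $G = \Theta - T(\Theta)$, I would define $H$ by the series above; then \eqref{matrix_eqn1} holds by construction and $H$ is symmetric, while $C^\top H C = T(\Theta)$ turns $G = \Theta - T(\Theta)$ back into \eqref{matrix_eqn2}, so Condition~\ref{matrix_condition} is met. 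The main obstacle is the contour-integral computation of $T(\Theta)$: one must rigorously justify the interchange of summation and integration (uniform convergence on the compact contour $\zeta$) and keep careful track of the conjugation forced by $\bar z = z^{-1}$, since it is precisely this conjugation that selects the diagonal terms and yields $C^\top H C$ rather than an identically vanishing integral.
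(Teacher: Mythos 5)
Your proof is correct and takes essentially the same route as the paper's: both rest on the identity $T(\Theta) = C^{\top}HC$ with $H = \sum_{k=0}^{\infty}(A^{\top})^{k}\Theta A^{k}$ solving the Lyapunov equation \eqref{matrix_eqn1}, the paper invoking Parseval's identity where you expand the resolvents in Neumann series and integrate term by term. Your explicit observation that $g(z)^{\top}$ in \eqref{theta_function} must be read as the conjugate factor on the unit circle (using $\bar z = z^{-1}$, so that the diagonal terms survive rather than the integral vanishing identically) is a worthwhile clarification of a point the paper leaves implicit in its appeal to Parseval.
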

\begin{proof}
Suppose there exists a $\Theta$ such that \eqref{G_Theta} holds. Let
\begin{eqnarray}
H &=& \frac{1}{2\pi i} \oint_{\mathbf{\zeta}} g\left(z\right)^{\top} \Theta g(z) \frac{1}{z}dz. \label{matrix_H}
\end{eqnarray}
It follows from Parseval's identity that $H =
\sum\limits_{t=0}^{+\infty}\left(A^{\top}\right)^{t}\Theta
A^{t}=\Theta+A^{\top}H A$. It therefore follows from \eqref{G_Theta}
that $G=\Theta - C^{\top}H C$. Hence Condition
\ref{matrix_condition} is satisfied.\par Conversely, suppose
\eqref{matrix_eqn1} holds, then
$\sum\limits_{t=0}^{+\infty}\left(A^{\top}\right)^{t}H
A^{t}-\sum\limits_{t=0}^{+\infty}\left(A^{\top}\right)^{t+1}H
A^{t+1}=\sum\limits_{t=0}^{+\infty}\left(A^{\top}\right)^{t}\Theta
A^{t}$. It follows from Parseval's Identity that $\displaystyle
H=\dfrac{1}{2\pi i} \oint_{\mathbf{\zeta}} g\left(z\right)^{\top}
\Theta g(z) \dfrac{1}{z}dz$ and it follows from  \eqref{matrix_eqn2}
and \eqref{theta_function} that $G=\Theta-T\left(\Theta\right)$.
Thus \eqref{G_Theta} holds. This completes the proof of Lemma
\ref{theta_condition}.
\end{proof}
\begin{lemma}\label{matrix_system}
If the system \eqref{problem_constraints}-\eqref{problem_IC} is
stable in the mean-square sense for $u_{t}\equiv 0$ then Condition
\ref{matrix_condition} holds.
\end{lemma}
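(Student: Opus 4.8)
The plan is to reduce the claim, via Lemma \ref{theta_condition}, to solvability of the linear matrix equation $G = \Theta - T(\Theta)$, and then to show that the linear operator $I - T$ on the space of symmetric $n\times n$ matrices is invertible under the mean-square stability hypothesis. Since Lemma \ref{theta_condition} shows that Condition \ref{matrix_condition} is equivalent to the existence of a symmetric $\Theta$ with $(I-T)\Theta = G$, it suffices to produce such a $\Theta$; invertibility of $I - T$ yields it uniquely as $\Theta = (I-T)^{-1}G$, which is symmetric because $G$ is symmetric and $T$ maps symmetric matrices to symmetric matrices.

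First I would record, using the Parseval computation already performed in the proof of Lemma \ref{theta_condition}, that $T(D) = C^{\top}\left(\sum_{t=0}^{\infty}(A^{\top})^{t} D A^{t}\right) C$. Introducing the auxiliary linear operators $\Psi(D) = A^{\top} D A$, $\Xi(D) = C^{\top} D C$, and $S = (I - \Psi)^{-1}$ on symmetric matrices, the series identity reads $S(D) = \sum_{t\ge 0}(A^{\top})^{t} D A^{t}$. This series converges and $I-\Psi$ is invertible because the eigenvalues of $\Psi$ are the products $\lambda_i(A)\lambda_j(A)$, so $\rho(\Psi) = \rho(A)^{2} < 1$. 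Consequently $T = \Xi \circ S$.

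The key step is the factorization
\[
(I - \mathcal{L})\,S = (I - \Psi - \Xi)(I - \Psi)^{-1} = I - \Xi S = I - T, \qquad \mathcal{L} := \Psi + \Xi.
\]
To control $I - \mathcal{L}$ I would pass to vectorized form: stacking columns turns $\Psi$ and $\Xi$ into $A^{\top}\otimes A^{\top}$ and $C^{\top}\otimes C^{\top}$, so $\mathcal{L}$ is represented by $(A\otimes A + C\otimes C)^{\top} = \mathcal{A}^{\top}$. By the equivalence recorded in the proof of Lemma \ref{proof_lemma1}, mean-square stability for $u_{t}\equiv 0$ is exactly $\rho(\mathcal{A}) < 1$, whence $\rho(\mathcal{L}) = \rho(\mathcal{A}) < 1$ and $I - \mathcal{L}$ is invertible. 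Since $S$ is invertible as well (its inverse is $I-\Psi$), the factorization exhibits $I - T$ as a product of two invertible operators, so $I - T$ is invertible and $\Theta = (I-T)^{-1}G$ solves $G = \Theta - T(\Theta)$.

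I expect the main obstacle to be the operator-theoretic bookkeeping: one must verify that $\Psi$, $\Xi$, $S$, and $T$ all act on and preserve the space of symmetric matrices, that the Parseval series genuinely realizes $(I-\Psi)^{-1}$ on that space, and that the vectorization correctly produces $\mathcal{A}^{\top}$ so that $\rho(\mathcal{L}) = \rho(\mathcal{A})$. Once these points are in place, invertibility of $I-T$, and hence the existence of $\Theta$ (and of $H$, via Lemma \ref{theta_condition}) asserted in Condition \ref{matrix_condition}, follows immediately.
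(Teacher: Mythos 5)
Your proof is correct, but it takes a genuinely different route from the paper. The paper does not go through Lemma \ref{theta_condition} at all: it vectorizes the two equations \eqref{matrix_eqn1}--\eqref{matrix_eqn2} simultaneously into the $2n^{2}\times 2n^{2}$ block system \eqref{matrix_eqn3} in the unknowns $(h,\theta)$, and then argues that this system is non-degenerate under mean-square stability, producing $H$ and $\Theta$ in one shot. You instead work on the space of symmetric matrices, reduce the claim to invertibility of $I-T$ via the already-proved Lemma \ref{theta_condition}, and establish that invertibility through the factorization $I-T=(I-\mathcal{L})(I-\Psi)^{-1}$ with $\mathcal{L}$ vectorizing to $\mathcal{A}^{\top}$, so that $\rho(\mathcal{L})=\rho(\mathcal{A})<1$ does the work. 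Two remarks on the comparison. First, your route is actually tighter on the key step: the paper's non-degeneracy argument claims the block system is degenerate if and only if $\mathcal{A}_{1}=\mathcal{A}_{2}$, but the correct criterion (obtained by subtracting the second block row from the first) is singularity of $\mathcal{A}_{1}-\mathcal{A}_{2}=\mathcal{A}^{\top}-I_{n^{2}}$, i.e.\ $1\in\lambda(\mathcal{A})$; stability rules this out, so the paper's conclusion stands, but the stated equivalence is wrong, whereas your spectral-radius argument hits exactly the right condition. Second, your approach also yields uniqueness of $\Theta$ (hence of $H$) for free, since $\Theta=(I-T)^{-1}G$, while the paper only asserts existence. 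What the paper's approach buys in exchange is independence from Lemma \ref{theta_condition} and from the Parseval/series representation of $T$: it needs no operator bookkeeping, only linear algebra on the stacked system. Both arguments ultimately rest on the same fact, $\rho(A\otimes A + C\otimes C)<1$, so neither is more general; yours is the more careful of the two.
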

\begin{proof}
Let us denote the $i$-th column of a matrix $D$ by $D_{.,i}$, let the matrices $H$ and $G$ be as in Condition \ref{matrix_condition} and let $h=\left[H_{.,1}^{\top},\ldots,H_{.,n}^{\top}\right]$, $\theta=\left[\Theta_{.,1}^{\top},\ldots,\Theta{.,n}^{\top}\right]$ and $g=\left[G_{.,1}^{\top},\ldots,G_{.,n}^{\top}\right]$. Let $\mathcal{A}_{1}=A^{\top}\otimes A^{\top}-I_{n^{2}}$ and $\mathcal{A}_{2}= -C^{\top}\otimes C^{\top}$, where $I_{n^{2}}$ is the $n^{2}\times n^{2}$ identity matrix. We can rewrite \eqref{matrix_eqn1}-\eqref{matrix_eqn2} as
\begin{eqnarray}
\left[
            \begin{array}{cc}
                                    \mathcal{A}_{1} & I_{n^{2}}\\
                                    \mathcal{A}_{2} & I_{n^{2}}\\
            \end{array}
\right]
\left[
            \begin{array}{c}
                                h\\
                                \theta\\
            \end{array}
\right]
            &=&
\left[
            \begin{array}{c}
                                0\\
                                g\\
            \end{array}
\right].
\label{matrix_eqn3}
\end{eqnarray}
Please notice that the system in \eqref{matrix_eqn3} would be
degenerate if and only if $\mathcal{A}_{1}=\mathcal{A}_{2}$;
however, this would require that $\mathcal{A}=A\otimes A+C\otimes
C=I_{n^{2}}$ which would violate the assumption that the matrix
$\mathcal{A}$ from \eqref{matrix_to_linear} satisfies
$\rho\left(\mathcal{A}\right)<1$ (which is equivalent to the
requirement that the system
\eqref{problem_constraints}-\eqref{problem_IC} be stable in the
mean-square sense for $u_{t}=0$). Therefore, if the mean-square
stability is satisfied, we can assume that the system in
\eqref{matrix_eqn3} always has a solution, $\left[h^{\top},
g^{\top}\right]^{\top}$. Hence matrices $H$ and $G$ exist (that is,
Condition \ref{matrix_condition} holds). This proves Lemma
\ref{matrix_system}.
\end{proof}
It follows from Lemma \ref{theta_condition} that
$G=\Theta-T\left(\Theta\right)$. Therefore, if we set $x_{t}=0$ and
$u_{t}=0$ for $t<0$, we can rewrite \eqref{problem_functional} as
\begin{eqnarray}
\Phi \left(u_{.}\right) & = & \sum\limits_{t=-\infty}^{+\infty} \mathbf{E} \, \mathcal{F}\left(x_{t},u_{t}\right)- \sum\limits_{t=-\infty}^{+\infty} \mathbf{E} \,  x_{t}^{*} T\left(\Theta\right) x_{t} \label{problem_functional2}
\end{eqnarray}
Let the matrix-valued function $\Pi : \mathbb{C} \mapsto \mathbb{C}^{m \times m}$ be defined by
\begin{eqnarray}
\Pi\left(z\right) = b^{\top}g\left(\overline{z}\right)^{\top}\Theta g\left(z\right)b + b^{\top}g\left(\overline{z}\right)^{\top}\gamma + \gamma^{\top}g\left(z\right)b + \Gamma,\label{matrix_PI}
\end{eqnarray}
and let
\begin{eqnarray}
\left(\hat{u}\left(z\right),R\hat{u}\left(z\right)\right)
&=&\frac{1}{2\pi i}\oint_{\mathbf{\zeta}}\hat{u}\left(z\right)^{*}\Pi\left(z\right)\hat{u}\left(z\right)\frac{1}{z}dz,\label{squared_term}\\
\left(r, \hat{u}\left(z\right)\right) &=& \frac{1}{2\pi i}\oint_{\mathbf{\zeta}}\mathbf{E}\, \overline{z}a^{\top}g\left(\overline{z}\right)^{\top}\left[G g\left(z\right)b + \gamma\right]\hat{u}\left(z\right)\frac{1}{z}dz,\label{linear_term}\\
\rho &=& \frac{1}{2\pi i}\oint_{\mathbf{\zeta}}\mathbf{E}\, a^{\top}g\left(\overline{z}\right)^{\top} G g\left(z\right)a\frac{1}{z}dz.\label{constant_term}
\end{eqnarray}
It follows from \eqref{squared_term}-\eqref{constant_term} and Parseval's identity that we can rewrite \eqref{problem_functional2} as
\begin{eqnarray}
\Phi\left(u_{.}\right) &=& \left(\hat{u}\left(z\right),R \hat{u}\left(z\right)\right) + \left(r, \hat{u}\left(z\right)\right) + \rho.\label{quadratic_form}
\end{eqnarray}
Thus. $\Phi\left(u_{.}\right)$ is a quadratic form in
$\hat{u}\left(.\right)$. Consider the deterministic control problem
below.\par \par Minimize
\begin{eqnarray}
\Phi_{1} \left(u_{.}\right) & = & \sum\limits_{t=0}^{+\infty} \left[ y_{t}^{*} \Theta x_{t} + 2\Re \, y_{t}^{*} \gamma u_{t} + u_{t}^{*} \Gamma u_{t}\right]\label{deterministic_functional}
\end{eqnarray}
over the set
\begin{eqnarray}
\mathbf{U} & = & \left\{ u_{t} \in \mathbb{R}^{m}: \sum\limits_{t=0}^{+\infty} \left|u_{t}\right|^{2} < +\infty\right\}\label{deterministic_feasible}
\end{eqnarray}
subject to
\begin{eqnarray}
y_{t+1} &=& A y_{t} + b u_{t}, \qquad t=0,1,2,\ldots \label{deterministic_constraints}\\
y_{0} &=& \mathbf{E}\, a. \label{deterministic_IC}
\end{eqnarray}
Here $y_{t}$ is an $n$-vector of states and $u_{t}$ is an $m$-vector of controls. Let matrices $G$, $\gamma$, $\Gamma$, $A$, and $b$ and the vector $a$ have the same properties as in the stochastic optimization problem \eqref{problem_functional}-\eqref{problem_IC} above, and let the matrix $\Theta$ be such that \eqref{G_Theta} is satisfied. Using Parseval's identity, we can rewrite \eqref{deterministic_functional} as $\Phi_{1}\left(u\left(.\right)\right) = \left(\hat{u}\left(.\right), R_{1} \hat{u}\left(.\right)\right)+\left(r_{1}, \hat{u}\left(.\right)\right)+\rho_{1}$, where
\begin{eqnarray}
\left(\hat{u}\left(z\right),R_{1} \hat{u}\left(z\right)\right)
&=&\frac{1}{2\pi i}\oint_{\mathbf{\zeta}}\hat{u}\left(z\right)^{*}\Pi\left(z\right)\hat{u}\left(z\right)\frac{1}{z}dz,\label{squared_term2}\\
\left(r_{1}, \hat{u}\left(z\right)
\right) &=& \frac{1}{2\pi i}\oint_{\mathbf{\zeta}}\mathbf{E}\, \overline{z}a^{\top}g\left(\overline{z}\right)^{\top}\left[G g\left(z\right)b + \gamma\right]\hat{u}\left(z\right)\frac{1}{z}dz,\label{linear_term2}\\
\rho_{1} &=& \frac{1}{2\pi i} \oint_{\mathbf{\zeta}}\mathbf{E}\,
a^{\top}g\left(\overline{z}\right)^{\top} G
g\left(z\right)a\frac{1}{z}dz.\label{constant_term2}
\end{eqnarray}
\begin{theorem}\label{supplementary_theorem}
An optimal control $u_{t}^{o}$ for the stochastic optimization problem \eqref{problem_functional}-\eqref{problem_IC} exists if and only if an optimal control for the deterministic optimization problem \eqref{deterministic_functional}-\eqref{deterministic_IC} exists. Furthermore, if \eqref{necessary_condition_strict} holds then the optimal controls in optimization problems \eqref{problem_functional}-\eqref{problem_IC} and \eqref{deterministic_functional}-\eqref{deterministic_IC} are identical and unique to within equivalence.
\end{theorem}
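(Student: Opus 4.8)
The plan is to exploit the observation that, once both cost functionals are written as quadratic forms in the $Z$-transform $\hat u(\cdot)$, they coincide term by term. Comparing the representation \eqref{quadratic_form} of $\Phi$ with the representation $\Phi_1(u(\cdot)) = (\hat u, R_1\hat u) + (r_1, \hat u) + \rho_1$ of $\Phi_1$, I note that \eqref{squared_term2}, \eqref{linear_term2} and \eqref{constant_term2} are \emph{identical} to \eqref{squared_term}, \eqref{linear_term} and \eqref{constant_term}: the quadratic kernel is $\Pi(z)$ in both, and the linear and constant terms are given by the same contour integrals. Hence $R = R_1$, $r = r_1$ and $\rho = \rho_1$, so that $\Phi(u_{\cdot}) = \Phi_1(u_{\cdot})$ for every $u \in \mathbf{U}$. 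The entire force of the theorem rests on this identity, which is in turn a consequence of Lemma \ref{theta_condition}: the correction term $T(\Theta)$ in \eqref{problem_functional2} is exactly what cancels the contribution of the multiplicative noise $Cx_t\xi_{t+1}$ and collapses the stochastic cost onto the deterministic one.

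Granting this identity, the first assertion is essentially immediate. First I would record, via Parseval's identity, that $u \mapsto \hat u$ is a linear isometry of $\mathbf{U}$ onto a closed subspace (the Hardy space) of $L^2(\mathbf{\zeta})$, so that minimizing $\Phi$ over $u \in \mathbf{U}$ is literally the same problem as minimizing the quadratic form over $\hat u$. Since $\Phi$ and $\Phi_1$ are one and the same function on $\mathbf{U}$, a control $u^o$ satisfies $\Phi(u^o) \le \Phi(u)$ for all $u \in \mathbf{U}$ if and only if it satisfies $\Phi_1(u^o) \le \Phi_1(u)$ for all $u$. Consequently an optimal control exists for the stochastic problem precisely when one exists for the deterministic problem, and any minimizer of one is simultaneously a minimizer of the other.

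For the uniqueness claim under \eqref{necessary_condition_strict}, the step is to turn the frequency inequality into a coercivity estimate for the quadratic form. A direct computation, using $\bar z = 1/z$ on $\mathbf{\zeta}$ together with the realness of $A,b,\gamma,\Gamma$ and $\Theta$, gives the pointwise identity $\mathcal{F}(g(z)bu, u) = u^*\Pi(z)u$ for all $u \in \mathbb{C}^m$ and $z \in \mathbf{\zeta}$. Thus \eqref{necessary_condition_strict} is equivalent to $\Pi(z) \succeq \delta I$ for every $z \in \mathbf{\zeta}$, and Parseval's identity then yields $(\hat u, R\hat u) = \frac{1}{2\pi}\int_0^{2\pi}\hat u(e^{i\omega})^*\Pi(e^{i\omega})\hat u(e^{i\omega})\,d\omega \ge \delta\|\hat u\|^2 = \delta\|u\|^2$. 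The common objective is therefore a uniformly convex quadratic functional on a Hilbert space and so has at most one minimizer; combined with the first part, this shows the optimal controls of the two problems are identical and unique to within equivalence.

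The main obstacle I anticipate is not the algebra but the functional-analytic bookkeeping. One must be careful that $\hat u$ ranges over the Hardy space of boundary values of functions analytic outside the unit disk rather than over all of $L^2(\mathbf{\zeta})$, and that the linear term $(r, \hat u)$ defines a bounded functional on this space, so that the objective is well-defined, lower semicontinuous and (under \eqref{necessary_condition_strict}) attains its infimum. Establishing the pointwise identity $\mathcal{F}(g(z)bu,u) = u^*\Pi(z)u$ also demands careful tracking of conjugate transposes, since on $\mathbf{\zeta}$ one has $g(z)^* = g(\bar z)^\top$; the computation is routine, but this is where a stray transpose or conjugate would most easily produce an error.
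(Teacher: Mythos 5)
Your proposal is correct, and its core coincides with the paper's own argument: both proofs rest on the observation that the quadratic-form representations \eqref{squared_term}--\eqref{constant_term} and \eqref{squared_term2}--\eqref{constant_term2} are term-by-term identical, so that $R=R_{1}$ and $r=r_{1}$ (you go slightly further and note $\rho=\rho_{1}$ as well, i.e.\ $\Phi\equiv\Phi_{1}$ on $\mathbf{U}$, which makes the equivalence of minimizers immediate rather than an appeal to the normal equation $Ru^{o}+r=0$ as in the paper). Where you genuinely diverge is the uniqueness claim under \eqref{necessary_condition_strict}: the paper simply cites the Hilbert-space frequency-theorem results of \cite{Yakubovich74,Yakubovich75} applied to the deterministic problem \eqref{deterministic_functional}--\eqref{deterministic_IC}, whereas you prove coercivity directly, via the pointwise identity $\mathcal{F}\left(g\left(z\right)bu,u\right)=u^{*}\Pi\left(z\right)u$ on $\mathbf{\zeta}$ (which is correct, using $g\left(z\right)^{*}=g\left(\overline{z}\right)^{\top}$ for real $A$), concluding $\Pi\left(z\right)\succeq\delta I$ and hence $\left(\hat{u},R\hat{u}\right)\geq\delta\left\|u\right\|^{2}$ by Parseval. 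Your route is more self-contained and makes explicit the link between the frequency inequality and positivity of the kernel $\Pi$ --- something the paper leaves implicit --- at the cost of the functional-analytic bookkeeping you yourself flag (boundedness of the linear functional $\left(r,\cdot\right)$ on the Hardy-space image of $\mathbf{U}$, so that the strongly convex functional attains its unique minimum); the paper's citation outsources exactly this work. Both arguments are sound, and yours additionally yields existence under \eqref{necessary_condition_strict} without leaving the paper's framework.
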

\begin{proof}
Note that, the necessary and sufficient conditions for the existence
of optimal $u^{o}$ that minimizes the quadratic form $\left(u, R
u\right) + \left(r, u\right) + \rho$ depend on $R$ and $r$.
Moreover, the optimal $u^{o}$, when it exists, is given by the
solution to $R u^{o} + r = 0$. We can see from
\eqref{squared_term}-\eqref{constant_term} and
\eqref{squared_term2}-\eqref{constant_term2} that $R_{1}=R$ and
$r_{1}=r$ for the functionals $\Phi$ and $\Phi_{1}$. It therefore
follows that the solution to
\eqref{problem_functional}-\eqref{problem_IC} exists if and only if
the solution to
\eqref{deterministic_functional}-\eqref{deterministic_IC} exists.
Furthermore, if \eqref{necessary_condition_strict} holds, it follows
from the results from \cite{Yakubovich74,Yakubovich75}, that the
optimal control for
\eqref{deterministic_functional}-\eqref{deterministic_IC} exists and
is unique. This completes the proof of Theorem
\ref{supplementary_theorem}.
\end{proof}
It follows that if the optimization problem
\eqref{deterministic_functional}-\eqref{deterministic_IC} has an
optimal solution then \eqref{necessary_condition} must hold.
Furthermore if \eqref{necessary_condition_strict} holds, it follows
that the solution exists and is unique (up to to within
equivalence). Hence the proof for Theorem \ref{main_theorem} follows
from Theorem \ref{supplementary_theorem}.
\begin{remark}
If $C = 0$ in the problem \eqref{problem_functional}-\eqref{problem_IC}, then the requirement that the system \eqref{problem_constraints}-\eqref{problem_IC} be stable in the mean-square sense for $u_{t} = 0$ will be equivalent to requiring that the matrix $A$ satisfy $\rho(A) < 1$. In addition, if we set $\Theta = G$ then \eqref{G_Theta} holds. Therefore Condition 6.1 is satisfied and $\mathcal{F}\left(x, u\right) = x^{*} G x + 2 \Re x^{*} \gamma x + u^{*}\Gamma u$, and Theorem \ref{matrix_condition} will be the same as the results from \cite{Yakubovich74,Yakubovich75} with $x_{0} = \mathbf{E}\, a$.
\end{remark}
\subsection{Numerical Algorithm}\label{numerics}
In this section we provide a Matlab code that takes matrices $G$, $A$ and $C$ as inputs, then checks if the system is stable in the mean-square sense. If it is stable, the program calculates matrices $\Theta$ and $H$.
\begin{verbatim}
%--------------------------------------------------------------------
%FILE NAME: numerics.m
%DESCRIPTION: Check if the discrete-time Linear Quadratic Control
% Problem is Solvable. That is, we calculate H and Theta
% that satisfy: {A'HA-H+Theta=0, Theta-C'HC-G=0}
%INPUTS: Matrices G, A, C
%OUTPUT: Matirx Theta, H
%--------------------------------------------------------------------
function [Theta, H] = numerics1(G, A, C)
%Verify that the inputs are all square matrices of the same dimension
s1=size(G); s2=size(A); s3=size(C);
if((s1(1)~=s2(1))|(s1(1)~=s3(1))|(s2(1)~=s3(1))|...
    (s1(2)~=s2(2))|(s1(2)~=s3(2))|(s2(2)~=s3(2))|...
      (s1(1)~=s1(2))|(s2(1)~=s2(2))|(s3(1)~=s3(2)))
  disp('ERROR! Dimension Mismatch');
  return;
end;
%--------------------------------------------------------------------
%Get the symmetric part of G
G=0.5*(G+G');
%--------------------------------------------------------------------
%Verify that the spectral radius of A is less than 1
if(max(abs(eig(A))) >= 1)
  disp('Matrix A is not convergent');
  return;
end;
%--------------------------------------------------------------------
%Verify that the system is Exponential Bounded in the mean-square sense
Big_A = kron(A,A)+kron(C,C);
if(max(abs(eig(Big_A))) >= 1)
  disp('The system is not EMS stable');
  return;
end;
%--------------------------------------------------------------------
%Solve the system, i.e. Calculate matrices H and Theta
A1=kron(A',A')-eye(size(A').^2);
B1=eye(size(A').^2);
A2=-kron(C',C');
B2=eye(size(A').^2);
M=[A1, B1; A2, B2];
v=[zeros(size(G(:)));G(:)];
solution=M\v;
theta=solution(length(solution)/2+1:length(solution));
h=solution(1:length(solution)/2);
%--------------------------------------------------------------------
%Return outputs H and Theta and Terminate the program
Theta=reshape(theta,size(A));
H=reshape(h,size(A));
\end{verbatim}
%
\nocite{*}
\bibliographystyle{plain}
\bibliography{frequency10}
\end{document}